\newcommand{\A}{A_n}
\newcommand{\F}{\mathcal F}
\newcommand{\G}{\mathcal G}
\newcommand{\m}{\mathbf b}
\newcommand{\rep}{\operatorname{rep}}
\newcommand{\Hom}{\operatorname{Hom}}
\newcommand{\Ext}{\operatorname{Ext}}
\newcommand{\N}{\mathbf M}
\newcommand{\n}{\mathbf a}
\newcommand{\C}{\mathcal C}
\renewcommand{\split}{s}
\renewcommand{\iota}{f}
\newcommand{\D}{\mathcal D}
\newcommand{\pr}{g}
\newcommand{\Gen}{\operatorname{Gen}}
\theoremstyle{plain}
\newtheorem{theorem}{Theorem}[section]
\newtheorem{proposition}[theorem]{Proposition}
\newtheorem{lemma}[theorem]{Lemma}
\newtheorem{corollary}[theorem]{Corollary}
\begin{document}

\title{The Tamari lattice as it arises in quiver representations}
\author{Hugh Thomas}
\address{Department of Mathematics and Statistics,
University of New Brunswick, Fredericton, NB, E3B 5A3, Canada.}
 \email{hthomas@unb.ca}

\begin{abstract}In this chapter, we explain how the Tamari lattice arises in 
the context of the representation theory of quivers, as the poset whose
elements are the torsion classes of a directed path quiver, with the order
relation given by inclusion.\end{abstract}

\maketitle

\section{Introduction}

In this chapter, we will explain how the Tamari lattice $T_n$ 
arises in the 
context of the representation theory of quivers.  In the representation theory
of quivers, one fixes a quiver (quiver being a synonym for ``directed graph'')
and then considers the category of {\it representations} of that quiver.  
(Terms which are not defined in this introduction will be defined shortly.)
Subcategories of this category with certain natural properties are called
{\it torsion classes}.  We show that the Tamari lattice $T_n$ arises 
as the set of torsion classes, ordered by inclusion, for the quiver 
consisting of
a directed path of length $n$.  It therefore follows that, for any directed 
graph, we obtain a generalization of the Tamari lattice.  At the end
of this chapter, we will comment
briefly on the lattices that arise in this way, which include the Cambrian
lattices discussed in Reading's contribution to this volume \cite{T_re}.  

The treatment
of quiver representations which we have undertaken is very elementary.  
In particular, we avoid all use of homological algebra.  
A reader familiar with quiver representations will have no
trouble finding quicker proofs of the results we present here.  
Introductions to 
quiver representations from a more algebraically sophisticated point
of view may be found in \cite{T_ARS,T_ASS}.  

\section{Quiver representations}
Let $Q$ be a quiver (i.e., a directed graph).  Fix a ground field $K$.  
A representation of $Q$ is an assignment of a 
finite-dimensional vector space $V_i$ over $K$ 
to each vertex $i$ of $Q$, and a linear map $V_\alpha:V_i\rightarrow V_j$ to each arrow $\alpha:i\rightarrow j$ of $Q$.  

For a pair of representations $V,W$ of $Q$, 
we define a morphism from $V$ to $W$ 
to consist of a collection of maps $f_i:V_i\rightarrow W_i$ for all 
vertices $i$, such
that for any $\alpha:i\rightarrow j$, we have that  $W_\alpha \circ f_i =
f_j \circ V_\alpha$.  We write $\Hom(V,W)$ for the set of morphisms from
$V$ to $W$.  It as a natural $K$-vector space structure.   
As usual, an isomorphism is a morphism which is 
invertible.  An injection is a morphism all of whose linear maps are 
injections; surjections are defined similarly.

These definitions make quiver representations into a {\it category}, 
which we denote $\rep Q$.  
(The careful reader is encouraged to confirm this.)

Given two representations of $Q$, their direct sum
$V\oplus W$ is defined in the obvious way: 
setting $(V\oplus W)_i = V_i \oplus W_i$, and 
$(V\oplus W)_\alpha=V_\alpha\oplus W_\alpha$.  

A representation is called {\it indecomposable} if it is not isomorphic to 
the direct sum of two non-zero representations.  

\section{Subrepresentations, quotient representations, and extensions}

If $Y$ is a representation of $Q$, a {\it subrepresentation} of $Y$ is
a representation $X$ such that for each $i$, $X_i$ is a subspace of 
$Y_i$, and for $\alpha:i\rightarrow j$, we have that $X_\alpha$ 
is induced from the inclusions of $X_i$ and 
$X_j$ into $Y_i$ and $Y_j$ respectively.  The inclusions of $X_i$ into
$Y_i$ define an injective morphism from $X$ to $Y$.  

If $Y$ is a representation of $Q$, and $x\in Y_i$, the subrepresentation
of $Y$ generated by $x$ is the representation $X$ such that 
$X_j$ is spanned by all images of $x$ under linear maps corresponding
to walks from $i$ to $j$ in $Q$.  

If $Y$ is a representation of $Q$, and $X$ is a subrepresentation of $Y$,
then it is also possible to form the {\it quotient representation}
$Y/X$.  By definition $(y/X)_i=Y_i/X_i$, and the maps of $Y/X$ are 
induced from the maps of $Y$.  The quotient maps from $Y_i$ to
$(Y/X)_i$ define a surjective morphism from $Y$ to $Y/X$.

Suppose $X,Y,Z$ are representations of $Q$.  Then $Y$ is said to be an 
{\it extension}
of $Z$ by $X$ if there is a subrepresentation of $Y$ which is isomorphic
to $X$, such that the corresponding quotient representation is isomorphic to
$Z$.  The extension is called {\it trivial} if there is a morphism $\split$ from $Y$
to $X$ which is the identity on $X$.  Such a morphism is said to split
the inclusion of $X$ into $Y$.

\begin{lemma} \label{T_split} If $Y$ is a trivial extension of $Z$ by $X$, then 
$Y$ is isomorphic to $X\oplus Z$.  
\end{lemma}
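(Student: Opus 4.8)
The plan is to build an explicit isomorphism $Y\to X\oplus Z$ out of the two morphisms we already have in hand: the splitting $\split\colon Y\to X$ provided by triviality, and the quotient morphism $\pr\colon Y\to Y/X\cong Z$. Writing $\iota\colon X\to Y$ for the inclusion of $X$ as a subrepresentation, the defining data are $\split\circ\iota=\mathrm{id}_X$, and the fact that $\pr$ is the surjection onto the quotient, so that at each vertex $i$ the kernel of $\pr_i$ is exactly $X_i$. The candidate isomorphism is the morphism $\phi\colon Y\to X\oplus Z$ given vertexwise by $\phi_i(y)=(\split_i(y),\pr_i(y))$.

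First I would check that $\phi$ is indeed a morphism of representations. This is immediate from the fact that $\split$ and $\pr$ are morphisms: for an arrow $\alpha\colon i\to j$ one has $\split_j\circ Y_\alpha=X_\alpha\circ\split_i$ and $\pr_j\circ Y_\alpha=Z_\alpha\circ\pr_i$, and combining these two identities coordinatewise says exactly that $\phi_j\circ Y_\alpha=(X\oplus Z)_\alpha\circ\phi_i$.

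Next I would verify that each linear map $\phi_i$ is a bijection, which is what it means for $\phi$ to be an isomorphism. For injectivity, suppose $\phi_i(y)=0$; then $\pr_i(y)=0$ forces $y\in X_i$, and then $\split_i(y)=y$ by the splitting condition, so $y=0$. For surjectivity, given $(x,z)\in X_i\oplus Z_i$, I would first choose $y'\in Y_i$ with $\pr_i(y')=z$ (possible since $\pr_i$ is onto), and then correct it by an element of $X_i$: setting $y=y'+\iota_i\bigl(x-\split_i(y')\bigr)$, a short computation using $\split_i\circ\iota_i=\mathrm{id}$ and $\pr_i\circ\iota_i=0$ gives $\split_i(y)=x$ and $\pr_i(y)=z$, so $\phi_i(y)=(x,z)$.

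The argument is entirely elementary, and the only real content is the surjectivity step, where one must produce the right preimage; the trick of adjusting a preimage of $z$ by a compensating element of $X_i$ is the crux. Everything else is the routine observation that morphisms built coordinatewise from morphisms are again morphisms. An equivalent and perhaps more conceptual packaging, which I might use instead, is to observe directly that $\ker\split$ is a subrepresentation of $Y$ (since $\split$ is a morphism, $y\in\ker\split_i$ implies $Y_\alpha(y)\in\ker\split_j$), that $Y=X\oplus\ker\split$ because at each vertex $\split_i$ is a retraction, and that $\pr$ restricts to an isomorphism $\ker\split\to Z$.
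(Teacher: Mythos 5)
Your proposal is correct and is essentially identical to the paper's proof, which also takes the morphism $\split\oplus\pr\colon Y\to X\oplus Z$ and observes it is an isomorphism over each vertex; you have simply written out the vertexwise linear-algebra verification (injectivity and surjectivity) that the paper leaves implicit. No gap; the extra detail is fine but not a different argument.
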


\begin{proof}
Let $\split$ be the map which splits the inclusion of $X$ into $Y$.  
Write $\pr$ for the
quotient map from $Y$ to $Z$.  
Then $\split\oplus\pr$ is a morphism from $Y$ to $X\oplus Z$, which is 
an isomorphism over each vertex.  It follows that it is an isomorphism 
of representations.  
\end{proof}



The following discussion is not necessary for our present considerations,
but may be of interest, in that it connects our discussion to notions
of homological algebra.  
It is possible to define a notion of equivalence on extensions, as 
follows: two extensions $Y,Y'$ of $X$ by $Z$ are said to be equivalent if 
there is an isomorphism from $Y$ to $Y'$ which induces the identity maps 
on $X$ and $Z$.  
We write 
$\Ext(Z,X)$ for the set of extensions of $Z$ by $X$ up to equivalence.  
This turns out to have a natural $K$-vector space structure.  $\Ext(Z,X)$ can
then be identified as $\Ext^1(Z,X)$ in the usual sense of homological algebra.
See \cite[Appendix A.5]{T_ASS}.  

\section{Pullbacks of extensions}

\begin{lemma} \label{T_pullback}
Let $Y$ be an extension of $Z$ by $X$.  Suppose we have a surjective map 
$h:Z'\rightarrow Z$.  Then there is a representation $Y'$ which is an
extension of $Z'$ by $X$, and such that $Y'$ admits a surjection to $Y$.  
\end{lemma}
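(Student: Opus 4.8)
The plan is to build $Y'$ as the pullback (fiber product) of the quotient map $\pr\colon Y\to Z$ and the given surjection $h\colon Z'\to Z$, where $\pr$ is the surjective morphism with kernel the copy of $X$ inside $Y$. Concretely, I would define $Y'$ vertexwise by setting $(Y')_i=\{(y,z')\in Y_i\oplus Z'_i : \pr_i(y)=h_i(z')\}$, a subspace of $(Y\oplus Z')_i$, with the arrow maps inherited from $Y\oplus Z'$.

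First I would check that this really defines a subrepresentation of $Y\oplus Z'$, i.e.\ that the map attached to any arrow $\alpha\colon i\to j$ carries $(Y')_i$ into $(Y')_j$. Since $\pr$ and $h$ are morphisms, we have $Z_\alpha\circ\pr_i=\pr_j\circ Y_\alpha$ and $Z_\alpha\circ h_i=h_j\circ Z'_\alpha$. Applying $Z_\alpha$ to the defining equation $\pr_i(y)=h_i(z')$ then yields $\pr_j(Y_\alpha y)=h_j(Z'_\alpha z')$, so $(Y_\alpha y, Z'_\alpha z')$ lies in $(Y')_j$. Thus $Y'$ is a representation, and the two coordinate projections $p\colon Y'\to Y$ and $q\colon Y'\to Z'$ are morphisms.

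Next I would verify the two required properties. The projection $p$ to $Y$ is surjective: given $y\in Y_i$, surjectivity of $h$ supplies $z'\in Z'_i$ with $h_i(z')=\pr_i(y)$, whence $(y,z')\in(Y')_i$ maps to $y$. Symmetrically, the projection $q$ to $Z'$ is surjective: given $z'\in Z'_i$, surjectivity of $\pr$ (it is a quotient map) supplies $y\in Y_i$ with $\pr_i(y)=h_i(z')$, so $(y,z')\in(Y')_i$ maps to $z'$. Finally I would identify the kernel of $q$: it consists of the pairs $(y,0)$ with $\pr_i(y)=0$, that is, with $y$ in the copy of $X=\ker\pr$ inside $Y$. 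This subrepresentation is isomorphic to $X$, and since $q$ is a surjection with this kernel, the corresponding quotient $Y'/\ker q$ is isomorphic to $Z'$. Hence $Y'$ is an extension of $Z'$ by $X$ admitting the surjection $p$ onto $Y$, as required.

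I expect no step to pose a serious obstacle, as this is just the elementary pullback construction carried out representation by representation. The only points demanding care are that the defining equations are genuinely preserved by the arrow maps (which is exactly the assertion that $\pr$ and $h$ are morphisms) and that the surjectivity of each projection relies on the surjectivity of the \emph{other} given map; in particular the surjectivity of $q$, and hence the conclusion that $Y'$ is an extension of $Z'$, uses that $\pr$ is a quotient map.
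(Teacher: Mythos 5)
Your proposal is correct and is essentially identical to the paper's own proof: the same vertexwise fiber product $Y'_i=\{(y,z'): \pr(y)=h(z')\}$ with arrow maps inherited from $Y\oplus Z'$, the same identification of $X$ as the kernel of the projection to $Z'$, and the same coordinate projection giving the surjection onto $Y$. You even spell out the surjectivity checks (each projection being surjective because the \emph{other} given map is) slightly more explicitly than the paper does.
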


The extension which we will exhibit in order to prove this lemma is
called the {\it pullback} of the extension $Y$ along the surjection 
$h$.  

\begin{proof} Let $i$ be a vertex of $Q$.  We are given surjective maps
$\pr:Y_i\rightarrow Z_i$ and $h:Z'_i\rightarrow Z_i$.  Define $Y'_i$ to be the 
pullback of these two maps, that is to say, 
$$Y'_i = \{(y,z')\mid y\in Y, z'\in Z', \textrm{ and } \pr(y)=h(z')\}.$$  

For $\alpha:i\rightarrow j$ an arrow of $Q$, define 
$Y'_\alpha= Y_\alpha\times Z'_\alpha$.  One verifies that this defines a map
from $Y'_i$ to $Y'_j$.  It follows that $Y'$ is a representation of $Q$.  

Write $\iota_i:X_i\rightarrow Y_i$ for the given injection from $X_i$ to 
$Y_i$.  There is an injective morphism $\iota'$ from 
$X$ to $Y'$, defined on $X_i$ by sending $x$ to $(\iota(x),0)$.  One 
checks that $(\iota(x),0)\in Y'_i$, and that these maps define a morphism
from $X$ to $Y'$.  

It is also easy to see that there is a surjective morphism from $Y'$ to 
$Z'$ defined on $Y_i'$ by sending $(y,z')$ to $z'\in Z_i'$.  The elements
of $Y_i'$ which are sent to zero by this map are those of the form
$(y,0)$, and $(y,0)\in Y_i'$ iff $\pr(y)=0$ iff $(y,0)\in \iota'_i(X_i)$.  So 
$Z'$ is isomorphic to $Y'/X$, as desired.  

Finally, one 
defines a map from $Y'_i$ to $Y_i$ by sending $(y,z')\rightarrow y$.
One checks that this defines a morphism from $Y'$ to $Y$, which is clearly
surjective.  
\end{proof}
 
\section{Indecomposable representations of the quiver $\A$}
Consider the quiver which consists of an oriented path: the vertices are 
numbered 1 to $n$, and for $1\leq i \leq n-1$, there is a unique arrow
$\alpha_i$ 
whose tail is at vertex $i$, and whose head is at vertex $i+1$.  We will
refer to this quiver as $\A$.





For $1\leq i\leq j \leq n$, define a representation $E^{ij}$ by
putting one-dimensional vector spaces at all vertices $p$ with 
$i\leq p\leq j$, with identity maps between 
successive one-dimensional vector spaces, and zero vector spaces and 
maps elsewhere.  

\begin{proposition} \label{T_indecomposables}
The representations $E^{ij}$ are indecomposable, and any 
indecomposable
representation of $\A$ is isomorphic to some $E^{ij}$.
\end{proposition}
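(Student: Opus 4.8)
The plan is to prove the two assertions separately: first that each $E^{ij}$ is indecomposable, and then that these exhaust the indecomposables. For indecomposability I would suppose $E^{ij}\cong V\oplus W$ and argue vertex by vertex. Since every vertex space of $E^{ij}$ has dimension at most one, at each vertex $p$ with $i\le p\le j$ exactly one of $V_p,W_p$ is one-dimensional and the other is zero, while off $[i,j]$ both vanish. The structure map of $E^{ij}$ along each arrow inside $[i,j]$ is the identity, and in a direct sum it splits as $V_{\alpha_p}\oplus W_{\alpha_p}$; since an injective map can only split as a sum of injective maps, $V_p\neq 0$ forces $V_{p+1}\neq 0$, and likewise for $W$. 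Thus the set of vertices where $V$ is supported is up-closed in $[i,j]$, and so is that of $W$. Two disjoint up-closed subsets covering $[i,j]$ cannot both be nonempty, since each nonempty up-set contains the top vertex $j$; hence one of $V,W$ is zero, proving indecomposability.

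For the converse I would first reduce to a decomposition statement: it suffices to show that every representation of $\A$ is a direct sum of various $E^{ij}$, since a nonzero indecomposable that is presented as such a direct sum must consist of a single summand. I would then induct on the total dimension $\sum_p\dim V_p$. Given a nonzero $V$, let $i$ be the least vertex with $V_i\neq 0$, choose $0\neq x\in V_i$, and let $U$ be the subrepresentation generated by $x$. Because the arrows of $\A$ only move forward, $U_p$ is spanned by the successive images $u_p=V_{\alpha_{p-1}}\circ\cdots\circ V_{\alpha_i}(x)$, so $U$ is one-dimensional from vertex $i$ up to the last vertex $j$ where this image is nonzero, with identity structure maps between consecutive vertices; that is, $U\cong E^{ij}$.

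The crux is to split $U$ off as a direct summand, which I would do by building a retraction $\split\colon V\to U$ and invoking Lemma~\ref{T_split}. Writing $\split_p(v)=\phi_p(v)\,u_p$ for functionals $\phi_p\colon V_p\to K$, the requirement that $\split$ be a morphism restricting to the identity on $U$ becomes $\phi_p(u_p)=1$ together with $\phi_{p+1}\circ V_{\alpha_p}=\phi_p$. I would satisfy these by choosing $\phi_j$ to be any functional with $\phi_j(u_j)=1$ and then defining $\phi_p:=\phi_{p+1}\circ V_{\alpha_p}$ downward; the compatibility relations then hold by construction, and $\phi_p(u_p)=1$ follows by descending induction. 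The one place where this could fail is the bottom boundary, where one also needs $\phi_i$ to vanish on the image of the incoming arrow into vertex $i$; this is exactly where the minimality of $i$ is essential, since then $V_{i-1}=0$ and the condition is vacuous. With $\split$ in hand, Lemma~\ref{T_split} gives $V\cong U\oplus V/U$, and since $V/U$ has smaller total dimension the induction completes the classification, which together with the first part yields the proposition. I expect this splitting step to be the main obstacle, precisely because the retraction must be made compatible with every structure map simultaneously.
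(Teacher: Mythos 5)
Your proposal is correct and follows essentially the same route as the paper: the indecomposability argument via the one-dimensional vertex spaces is the same in substance, and the classification hinges on the identical key step of generating a subrepresentation $U\cong E^{ij}$ from a vector at the minimal supported vertex and building a retraction $\split$ downward from $j$ (your functionals $\phi_p$ are exactly the paper's maps $\split_p$), with minimality of $i$ handling the bottom boundary and Lemma~\ref{T_split} splitting off the summand. The only cosmetic difference is that you organize the converse as an induction on total dimension proving every representation decomposes, whereas the paper applies the splitting directly to an indecomposable $V$ and concludes $V/T=0$.
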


\begin{proof} Suppose $E^{ij}\cong X\oplus Y$.  Since for $E^{ij}$ the vector spaces 
at each vertex are at most one-dimensional, for a given vertex $p$, at most
one of $X_p$ and $Y_p$ is non-zero.  If neither $X$ nor $Y$ is zero, 
there must be some $i\leq p < j$ 
such that either $X_p$ is zero and $Y_{p+1}$ is zero, or vice versa.  
In either case, $(X\oplus Y)_{\alpha_p} = 0$.   However,
 $(E^{ij})_{\alpha_p} \ne 0$,
and it follows 
that $E^{ij}$ is not isomorphic to $X\oplus Y$.  

Let $V$ be an indecomposable representation of $\A$. Write $p_j$ for 
$V_{\alpha_j}$.   
Choose $i$ minimal such that $V_i\ne 0$, and choose a non-zero $t\in V_i$.
Let $T$ be the subrepresentation of $V$ generated by $t$, which admits
a natural injection into $V$.  We have natural inclusions of the 
vector space at vertex $k$ for $T$ into the vector space at vertex $k$ for
$V$, and we denote this inclusion by $\iota_k:T_k\rightarrow V_k$.

$$\includegraphics{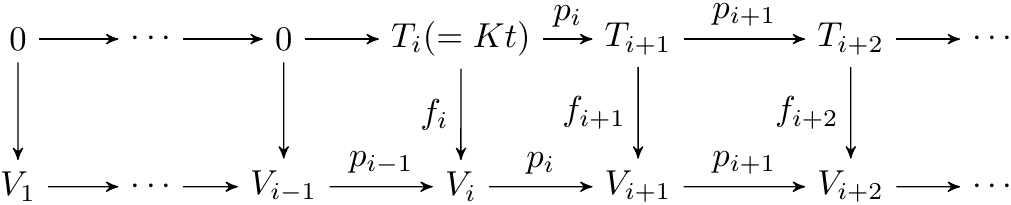}$$


Let $j$ be maximal so that $p_{j-1}\dots p_i(t)\ne 0$.  Define a map 
$\split_j$ which splits the inclusion $\iota_j$, that is to say, a map
such that $\split_j\circ \iota_j$ is the identity.  Now inductively
define $\split_{j-1}, \split_{j-2},\dots, \split_i$ so that, when constructing
$\split_k$, we have that $\split_k$ splits $\iota_k$, and 
$p_k \circ \split_k = \split_{k+1} \circ p_{k}$.  For $k$ not between $i$ 
and $j$, define $\split_k=0$.  

We claim the maps $\split_k$ define a morphism from $V$ to $T$.  
The only conditions which we did not explicitly build into the construction
of $\split_k$ are the commutativity conditions 
$p_{i-1}\circ \split_{i-1} = \split_i\circ p_{i-1}$  (if $i>1$) and
$p_{j}\circ \split_j = \split_{j+1}\circ p_j$ (if $j<n$).  The first 
is satisfied by our assumption that $i$ is minimal such that $V_i\ne 0$, 
and the second is satisfied by our assumption that $p_j\dots p_i(x)=0$,
which implies that $p_{j}|_{T_{j}}=0$.

By Lemma \ref{T_split}, it follows that $V$ is isomorphic to the direct sum of 
$T$ and $V/T$.  Since $V$ is indecomposable by assumption, and $T$ is
non-zero, $V/T$ must be zero, so $V$ is isomorphic to $T$, which is 
isomorphic to $E^{ij}$, proving the proposition.\end{proof}

\section{Morphisms and extensions between indecomposable representations of $\A$}

\begin{proposition}\label{T_morphisms} The space of morphisms from $E^{ij}$ to $E^{kl}$ is either
0-dimensional or 1-dimensional.  It is one-dimensional iff 
$k\leq i \leq l \leq j$.   
\end{proposition}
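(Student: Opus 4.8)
The plan is to analyze morphisms $f: E^{ij} \to E^{kl}$ directly in terms of their component maps. Recall that $E^{ij}$ has a one-dimensional space at each vertex $p$ with $i \le p \le j$ and zero elsewhere, with identity maps along the arrows in that range. Let me think about what a morphism looks like.

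A morphism $f$ consists of maps $f_p: (E^{ij})_p \to (E^{kl})_p$ at each vertex. For this to be nonzero, we need some vertex $p$ where both spaces are nonzero, i.e., $p$ lies in $[i,j] \cap [k,l]$. Since both representations have one-dimensional spaces there, each $f_p$ is just multiplication by a scalar $c_p$. The commutativity condition at arrow $\alpha_p: p \to p+1$ says $(E^{kl})_{\alpha_p} \circ f_p = f_{p+1} \circ (E^{ij})_{\alpha_p}$.

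So the approach breaks into steps. First, I would observe that outside $[i,j]$ the source is zero so $f_p = 0$ there, and outside $[k,l]$ the target is zero so $f_p = 0$ there; hence $f$ is determined by scalars $c_p$ for $p \in [i,j] \cap [k,l]$. Second, I would analyze the commutativity relations. Within the overlap interval, both structure maps $(E^{ij})_{\alpha_p}$ and $(E^{kl})_{\alpha_p}$ are identities, forcing $c_p = c_{p+1}$, so all the scalars in the overlap are equal to a single scalar $c$. This already shows the Hom space is at most one-dimensional. Third, I must check the boundary arrows where the overlap begins and ends, and this is where the inequalities come from.

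The main obstacle, and the heart of the argument, is handling the boundary conditions precisely. I expect the overlap $[i,j] \cap [k,l]$ to be nonempty and to support a consistent nonzero morphism exactly when $k \le i \le l \le j$. The key is the arrow leaving the top of the overlap and the arrow entering the bottom. At the arrow entering the overlap from below: if $i > k$, then at vertex $i-1$ the source $E^{ij}$ is zero but we need no constraint; the real danger is an arrow where the source map is an identity but the target map is zero, which would force $c = 0$. Concretely, for the morphism to survive I need that wherever $(E^{ij})_{\alpha_p}$ is an identity into a vertex in the overlap, the target also behaves compatibly, and wherever the overlap ends because $l < j$, the target map $(E^{kl})_{\alpha_l}$ is zero while the source map is an identity, which is fine since it connects to $c_{l+1} = 0$. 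I would enumerate the positions of $i,j,k,l$ and verify that $c \ne 0$ is consistent precisely under $k \le i \le l \le j$: the condition $i \le l$ and $k \le j$ guarantees nonempty overlap, while $k \le i$ ensures no arrow forces the scalar to propagate into a region where the target has already died on the low end, and $l \le j$ does the same on the high end. I would finish by exhibiting, under $k \le i \le l \le j$, the explicit morphism given by $c_p = 1$ on $[i,l]$ and checking all commutativity squares hold, confirming the one-dimensional space is actually achieved.
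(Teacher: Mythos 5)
Your proposal is correct and follows essentially the same elementary route as the paper: a direct analysis of the component maps and of the commutativity squares at each arrow. The only cosmetic difference is that the paper gets the ``at most one-dimensional'' claim and the condition $k\leq i\leq l$ in one stroke by observing that $E^{ij}$ is generated by $(E^{ij})_i$ (so a morphism is determined by $f_i$), and then checks only the square at $\alpha_j$ to force $l\leq j$, whereas you propagate a single scalar along the overlap $[i,j]\cap[k,l]$ and read off $k\leq i$ and $l\leq j$ from the two boundary squares --- the content is identical.
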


\begin{proof} 
$E^{ij}$ is generated by $(E^{ij})_i$, so a morphism $f:E^{ij}\rightarrow
E^{kl}$ is determined by its restriction to $(E^{ij})_i$.  The space of 
maps from $(E^{ij})_i$ to $(E^{kl})_i$ is one-dimensional if $k\leq i \leq l$,
and zero otherwise.  If $l>j$, then the commutativity condition
corresponding to $\alpha_j$ cannot be satisfied for a non-zero morphism;
if on the other hand $l\leq j$, then we see that non-zero morphisms do exist. 
\end{proof}

\begin{proposition}\label{T_extensions} The only circumstance in which 
there is a non-trivial extension of $E^{ij}$ by $E^{kl}$ is 
if $i+1 \leq k\leq j+1 \leq l$.  In this case, any non-trivial extension
is isomorphic to $E^{il}\oplus E^{kj}$.  (If $k=j+1$, we interpret 
$E^{kj}$ as zero.)
\end{proposition}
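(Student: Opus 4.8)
The plan is to fix a short exact sequence presenting the extension, $0 \to E^{kl} \xrightarrow{\iota} Y \xrightarrow{\pr} E^{ij} \to 0$, so that $E^{kl}$ is the distinguished subrepresentation and $E^{ij}$ the quotient, and to decide exactly when it fails to split by asking whether the surjection $\pr$ admits a section. A section of $\pr$ is equivalent to a splitting of the inclusion $\iota$, so by Lemma~\ref{T_split} its existence is precisely the triviality of the extension. Since $E^{ij}$ is generated by its one-dimensional space at vertex $i$ (as used in the proof of Proposition~\ref{T_morphisms}), a section is the same data as a choice of $w\in Y_i$ lying over a generator of $(E^{ij})_i$ whose generated subrepresentation of $Y$ maps isomorphically onto $E^{ij}$. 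First I would note that the dimension vector of $Y$ is forced to be the sum of those of $E^{kl}$ and $E^{ij}$, and that any such $w$ generates a copy of some $E^{i,m}$ with $m\ge j$; writing $w_j$ for the image of $w$ at vertex $j$, the section exists precisely when $w$ can be chosen so that $Y_{\alpha_j}(w_j)=0$ (so that $m=j$).

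The key step is to isolate the obstruction to this vanishing. Applying $\pr$ and using that the arrow $\alpha_j$ acts as zero on $E^{ij}$ shows that $Y_{\alpha_j}(w_j)$ already lies in $\iota((E^{kl})_{j+1})$, a space that is nonzero exactly when $k\le j+1\le l$. The only freedom in $w$ is to modify it by $\ker\pr_i=\iota((E^{kl})_i)$, which changes $Y_{\alpha_j}(w_j)$ by the image of the composite of structure maps of $E^{kl}$ running from vertex $i$ to vertex $j+1$. That composite is an isomorphism onto $(E^{kl})_{j+1}$ when $k\le i$ and $j+1\le l$, and is otherwise zero, since $(E^{kl})_i$ vanishes once $i<k$. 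Hence a section always exists unless the obstruction space $(E^{kl})_{j+1}$ is nonzero while the adjustment fails to be surjective; tracing the inequalities, this leaves exactly $i+1\le k\le j+1\le l$ as the only configuration admitting a non-trivial extension. I expect this obstruction bookkeeping to be the main obstacle, as it requires careful tracking of which one-dimensional spaces are nonzero and of how the path maps of $E^{kl}$ interact with the chosen lift.

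Finally I would identify the isomorphism type of a non-split $Y$ in the range $i+1\le k\le j+1\le l$. Writing $Y$ as a direct sum of intervals via Proposition~\ref{T_indecomposables} and matching the forced dimension vector, a short combinatorial check shows the only possibilities are $E^{ij}\oplus E^{kl}$ and $E^{il}\oplus E^{kj}$, with $E^{kj}=0$ and $Y=E^{il}$ when $k=j+1$. The first must split: because $k>i$, Proposition~\ref{T_morphisms} gives $\Hom(E^{ij},E^{kl})=0$, so any injection $E^{kl}\to E^{ij}\oplus E^{kl}$ restricts to an isomorphism on the second summand and is split by the corresponding projection. Thus a non-split extension forces $Y\cong E^{il}\oplus E^{kj}$. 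To complete the argument I would exhibit such an extension explicitly: combine the inclusion $E^{kl}\hookrightarrow E^{il}$ (valid since $i\le k\le l$) with the surjection $E^{kl}\twoheadrightarrow E^{kj}$ (valid since $k\le j\le l$ in the nondegenerate case) into a single injection $E^{kl}\to E^{il}\oplus E^{kj}$, then check directly that its cokernel has every interval-$[i,j]$ structure map nonzero and so equals $E^{ij}$. Non-splitness is then immediate from Lemma~\ref{T_split}, since $E^{il}\oplus E^{kj}\not\cong E^{ij}\oplus E^{kl}$ when $i<k$ and $j<l$.
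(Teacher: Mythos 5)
Your argument is correct and rests on the same underlying mechanism as the paper's proof: lift the generator of $(E^{ij})_i$ to an element $w$ of $Y_i$ and ask whether its image at vertex $j+1$ can be made to vanish by adjusting $w$ by elements of the embedded copy of $(E^{kl})_i$. The packaging and the endgame differ, though. The paper works directly with the subrepresentation $T$ generated by the lift: after the correction available when $k\leq i$, either $T\cong E^{ij}$ is a summand (trivial case) or $T\cong E^{il}$ is a summand whose complement is forced to be $E^{kj}$. You instead phrase the first half as an obstruction computation --- the obstruction lives in $(E^{kl})_{j+1}$ modulo the image of the path map of $E^{kl}$ from vertex $i$ to vertex $j+1$, which is in effect a bare-hands computation of $\Ext(E^{ij},E^{kl})$ --- and you pin down the isomorphism type of a non-split $Y$ by dimension-vector combinatorics together with ruling out the split decomposition, rather than by exhibiting $E^{il}$ as a summand. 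You also supply something the paper leaves to examples: an explicit non-split extension in the range $i+1\leq k\leq j+1\leq l$, which matters if one reads the proposition as asserting existence. Two small points to tighten. First, in the degenerate case $k=j+1$ the dimension vector (which is $1$ on all of $[i,l]$) does not by itself restrict the decomposition to $E^{ij}\oplus E^{kl}$ or $E^{il}$; for instance with $(i,j,k,l)=(1,1,2,3)$ in $A_3$ it also admits $E^{12}\oplus E^{33}$. You need to invoke in addition the embedded copy of $E^{kl}$, whose path map from $k$ to $l$ is nonzero, to exclude the other interval partitions. Second, the cleanest reason the component $E^{kl}\rightarrow E^{kl}$ of the inclusion into $E^{ij}\oplus E^{kl}$ must be an isomorphism is that $E^{kl}$ admits no injection into $E^{ij}$ (it is nonzero at vertex $l>j$); the vanishing of $\Hom(E^{ij},E^{kl})$ that you cite does the job too, but only via the quotient of the inclusion rather than the inclusion itself, so spell that step out. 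Neither point affects the soundness of the approach.
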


\begin{proof} Let $Y$ be an extension of $E^{ij}$ by $E^{kl}$.  Let 
$t$ be an element of $Y_i$ which maps to a non-zero element of 
$(E^{ij})_i$.  Let $T$ be the sub-representation of
$Y$ which is generated by $t$.  The representation 
$T$ is definitely non-zero at the
vertices $p$ with $i\leq p \leq j$, since the image of $t$ in 
$(E^{ij})_p$ is non-zero for such $p$.  If $T_{j+1}=0$, then       
$T$ is isomorphic to $E^{ij}$, and
the projection from $Y$ to $E^{ij}$ splits the inclusion of $T$ into $Y$, so $Y$ is isomorphic to $E^{ij}\oplus E^{kl}$ by Lemma \ref{T_split}.  

Therefore, in order for there to exist a non-trivial extension, we must
definitely have $k\leq j+1 \leq l$.  Consider now the case that $k\leq i$,
which we must also exclude.  Let $v$ be the image of $t$ in
$Y_{j+1}$, and suppose it is non-zero.  Since $E^{ij}$ is not supported 
over $j+1$, we must have that $v$ lies in the image of $E^{kl}$.  
By our assumption that $k\leq i$, there is an element $x$ of 
$(E^{kl})_i$ such that its image in $(E^{kl})_{j+1}$ coincides with $v$.  
Now set $t'=t-v$, and repeat the above analysis with $t'$.  By construction
the image of $t'$ in $Y_{j+1}$ is zero, so $E^{ij}$ is a direct summand of
$Y$ and the extension is trivial.  

Finally, suppose that $i+1\leq k \leq j+1 \leq l$, and that we have some 
$t$ in $Y_i$ whose image in $Y_{j+1}$ is non-zero.  It follows necessarily
that the subrepresentation $T$ of $Y$ generated by $t$ must be isomorphic to
$E^{il}$.   As 
in the proof of Proposition \ref{T_indecomposables}, we see that $T$ is 
a direct summand of $Y$, so $Y$ is isomorphic to $E^{il}\oplus Z$ for
some $Z$, and it is clear that we must have $Z\cong E^{kj}$ (or $Z=0$ if
$k=j+1$).
\end{proof}

The only nontrivial extension of indecomposable representations of $A_2$ 
is an extension of $E^{11}$ by $E^{22}$, isomorphic to $E^{12}$.  There are 
several examples of nontrivial extensions among representations of 
$A_3$, such as, for example, the extension of $E^{12}$ by $E^{23}$ which
is isomorphic to $E^{13}\oplus E^{22}$.  Note that this latter example is 
obviously non-trivial even though the extension is a direct sum, since
the summands in the direct sum are not the same as the two indecomposables
from which the extension was built.

\section{Subcategories of $\rep Q$}
By definition, a subcategory of a category is a category whose objects and morphisms belong to those of the 
original category, and such that the identity maps in the subcategory and 
category coincide.  This notion is not strong enough for our purposes.  
A full additive subcategory $\mathcal B$ of $\rep Q$ is a subcategory satisfying the following conditions:
\begin{itemize}
\item For $X,Y$ objects of $\mathcal B$, we have that
$\Hom_{\mathcal B}(X,Y)=\Hom(X,Y)$. 
\item There is some set of indecomposable objects of $\rep Q$ such that
the objects of $\mathcal B$ consist of all finite direct sums of indecomposable
objects from this set.
\end{itemize}
From now on, when we speak of subcategories, we always mean full additive
subcategories.  

In this chapter, 
we are particularly interested in {\it torsion classes} in $\rep Q$.  
A 
torsion class in an abelian category $\mathcal A$ is a full additive subcategory
closed under quotients and extensions.  That is to say, if $Y\in \mathcal A$,
and there is a surjection from $Y$ to $Z$, then $Z\in \mathcal A$, and if
$X,Z$ are in $\mathcal A$, and $Y$ is an extension of $Z$ by $X$, then
$Y\in \mathcal A$.  Torsion classes play an important role in tilting theory,
which it is beyond the scope of this chapter to review.  See \cite{T_ARS,T_ASS}
for more information on this subject.  

\section{Quotient-closed subcategories}
As a prelude to classifying the torsion classes of $\rep \A$, we consider
the subcategories of $\rep \A$ which are closed under quotients.  
This class of subcategories of $\rep Q$ 
was not studied classically but 
has received some recent attention (see \cite{T_ort} or, 
considering the equivalent dual case, \cite{T_ri}).  

Let $\N$ be the set of $n$-tuples $(a_1,\dots,a_{n})$ with 
$0\leq a_i \leq n+1-i$.  For $\n$ in $\N$, define 
$$\F_\n =\{(i,j) \mid i\leq j < i+a_i\}$$
and let $\C_\n$ be the full
subcategory consisting of direct sums of indecomposables 
$E^{ij}$ with $(i,j) \in \F_\n$.

\begin{proposition} The quotient-closed subcategories of $\rep \A$ are 
exactly those categories of the form $\C_\n$ for $\n \in \N$.  
\end{proposition}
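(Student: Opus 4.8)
The plan is to prove the proposition by showing the two directions of the claimed characterization: first that every $\C_\n$ is quotient-closed, and then that every quotient-closed subcategory has the form $\C_\n$. I would begin by understanding the combinatorial encoding. The condition $\F_\n = \{(i,j)\mid i\le j < i+a_i\}$ says that for each starting vertex $i$, the indecomposables $E^{ij}$ lying in $\C_\n$ are exactly those with $j$ ranging over an \emph{initial segment} of the allowed values: $j\in\{i,i+1,\dots,i+a_i-1\}$ (with $a_i=0$ meaning no $E^{ij}$ starts at $i$). The bound $0\le a_i\le n+1-i$ just ensures $j\le n$. So the data $\n$ is equivalent to specifying, for each $i$, an initial segment of the interval representations starting at $i$.

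For the forward direction, I would first record which representations arise as quotients of an indecomposable $E^{ij}$. The key observation, following from the structure of $\A$ and Proposition \ref{T_indecomposables}, is that the quotients of $E^{ij}$ are precisely the representations $E^{i'j}$ with $i\le i'\le j$ (together with the zero representation): a subrepresentation of the interval $E^{ij}$ is supported on a \emph{terminal} subinterval (since the subrepresentation generated by any nonzero element at the smallest occupied vertex reaches all the way to $j$), so quotients shrink the interval from the \emph{left}, keeping the right endpoint $j$ fixed. Since a subcategory is determined by its indecomposable objects and quotients distribute over direct sums in the sense relevant here, to check $\C_\n$ is quotient-closed it suffices to check closure on indecomposables. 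The crucial point is that this ``shrink from the left'' operation $E^{ij}\mapsto E^{i'j}$ \emph{increases} $a_{i'}$'s reach: we need $(i',j)\in\F_\n$ whenever $(i,j)\in\F_\n$ and $i'\ge i$, i.e. $j<i'+a_{i'}$ whenever $j<i+a_i$. This is exactly the content I would isolate as the defining monotonicity condition forced by quotient-closure, and I expect this translation is where I must be careful: the condition on $\n$ is not that each $a_i$ is arbitrary, but that the family $\{(i,j)\in\F_\n\}$ is downward closed under increasing $i$ with $j$ fixed.

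Here I anticipate the main obstacle. On close reading, an arbitrary $\n\in\N$ need \emph{not} satisfy this monotonicity, so either the set $\N$ as defined secretly encodes only the monotone tuples, or the parametrization is set up so that every $\C_\n$ for $\n\in\N$ is automatically quotient-closed. I would therefore reexamine the definition to determine the precise convention and, if necessary, verify that the map $\n\mapsto\C_\n$ from $\N$ is a bijection onto quotient-closed subcategories by checking that distinct tuples give distinct subcategories (clear, since one can read off $a_i$ as the number of $j$ with $(i,j)\in\F_\n$) and that it lands in the right target. The cardinality check is suggestive: $|\N|=\prod_{i=1}^n (n+2-i)=(n+1)!$, which should match the count of quotient-closed subcategories, and reconciling this count is the sanity check I would run to confirm the encoding is correct.

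For the reverse direction, given a quotient-closed subcategory $\C$, I would define $a_i$ to be the number of indecomposables $E^{ij}\in\C$ starting at vertex $i$, and then argue that these indecomposables form an initial segment $j\in\{i,\dots,i+a_i-1\}$. The argument is: if $E^{ij}\in\C$ with $j>i$, then since $E^{i,j-1}$ is a quotient of $E^{ij}$ (shrink the right end---wait, here I must use the correct quotient, namely killing the top/socle appropriately), closure forces $E^{i,j'}\in\C$ for the relevant smaller intervals, giving the initial-segment structure. I would pin down exactly which direction the quotients shrink by direct computation with the interval $E^{ij}$: the subrepresentation generated by the image of the generator at vertex $i$ under the maps is all of $E^{ij}$, so proper subrepresentations must omit vertex $i$, confirming quotients keep the right endpoint and can only move the left endpoint rightward; this then shows $\C$ determines a well-defined $\n\in\N$ with $\C=\C_\n$, completing the proof.
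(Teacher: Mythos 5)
There is a genuine error at the heart of your argument: you have the direction of quotients of $E^{ij}$ reversed. You correctly observe that a nonzero subrepresentation of $E^{ij}$ is supported on a terminal subinterval, i.e.\ is some $E^{pj}$ with $i\le p\le j$; but the \emph{quotient} by such a subrepresentation is then supported on the complementary initial subinterval, so the quotients of $E^{ij}$ are the representations $E^{ij'}$ with $i\le j'\le j$ (left endpoint fixed, right endpoint shrinking), not the $E^{i'j}$ you claim. With the correct direction, the definition $\F_\n=\{(i,j)\mid i\le j<i+a_i\}$ says exactly that for each fixed $i$ the admissible $j$ form an initial segment, which is precisely closure under quotients of indecomposables; no extra monotonicity condition on $\n$ is needed, and the ``main obstacle'' you anticipate (that an arbitrary $\n\in\N$ might fail a monotonicity constraint, forcing a reinterpretation of $\N$) is a phantom created by the reversal. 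You flag this tension but never resolve it, and your closing sentence (``quotients keep the right endpoint and can only move the left endpoint rightward'') repeats the error, so as written neither direction of the proof goes through.

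A secondary gap: even after fixing the direction, the assertion that it ``suffices to check closure on indecomposables'' because ``quotients distribute over direct sums in the sense relevant here'' is exactly the point requiring proof. If $X\in\C_\n$ surjects onto $Y$ and $E^{kl}$ is an indecomposable summand of $Y$, one must show that some indecomposable summand of $X$ of the form $E^{kj}$ with $j\ge l$ exists; this is the content of the paper's Lemma~\ref{T_surjections}, proved via the Hom-space computation of Proposition~\ref{T_morphisms} (summands $E^{i'j'}$ with $i'<k$ admit no nonzero maps to $E^{kl}$, and those with $i'>k$ cannot cover vertex $k$). Your proposal asserts this reduction without argument.
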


In order to prove this proposition, we need a lemma:

\begin{lemma} \label{T_surjections}
Suppose $X$ is a representation of $\A$ which admits a surjection to 
$E^{kl}$.  Then any expression for $X$ as a direct sum of indecomposables
includes an indecomposable $E^{kj}$ with $j\geq l$. 
\end{lemma}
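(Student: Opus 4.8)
The plan is to reduce surjectivity onto $E^{kl}$ to surjectivity at the single vertex $k$, and then to read off the required summand directly from the classification of morphisms in Proposition \ref{T_morphisms}. To begin, fix a decomposition $X\cong\bigoplus_m E^{i_mj_m}$ and a surjection $f:X\rightarrow E^{kl}$. Since a surjective morphism of representations is by definition surjective at every vertex, in particular the linear map $f_k:X_k\rightarrow(E^{kl})_k$ is surjective; as $(E^{kl})_k$ is one-dimensional, this simply says $f_k\neq 0$.

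Next I would use that a morphism out of a direct sum is the same as a tuple of morphisms out of the summands, so that $f=\sum_m f^{(m)}$ with each $f^{(m)}:E^{i_mj_m}\rightarrow E^{kl}$. Restricting to vertex $k$ gives $f_k=\sum_m f^{(m)}_k$, and hence $f_k\neq 0$ forces some index $m$ with $f^{(m)}_k\neq 0$. For this $m$ two facts hold simultaneously: first, $(E^{i_mj_m})_k\neq 0$, which forces $i_m\leq k\leq j_m$; and second, $f^{(m)}\neq 0$, so that Proposition \ref{T_morphisms} (with source $E^{i_mj_m}$ and target $E^{kl}$) gives $k\leq i_m\leq l\leq j_m$.

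Finally I would combine these inequalities: $k\leq i_m$ together with $i_m\leq k$ force $i_m=k$, while $l\leq j_m$ gives $j_m\geq l$. Thus the chosen summand has the form $E^{kj_m}$ with $j_m\geq l$, which is exactly what the lemma asserts. The only point requiring a little care is the bookkeeping of which vertex one evaluates at: surjectivity is used only at vertex $k$, but it is essential to test there, at the leftmost support of $E^{kl}$, since this is precisely what pins the left endpoint down to $i_m=k$ rather than merely $i_m\leq k$. I do not expect any genuine obstacle, as the argument is a direct application of Proposition \ref{T_morphisms}; and because it applies to an arbitrary given decomposition, it requires no appeal to uniqueness of the decomposition into indecomposables.
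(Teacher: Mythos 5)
Your proof is correct and follows essentially the same route as the paper: both arguments reduce to surjectivity of the map at the single vertex $k$ and then invoke Proposition \ref{T_morphisms} to pin down a summand $E^{kj}$ with $j\geq l$. Your version merely streamlines the paper's case split on $i<k$, $i>k$, $i=k$ into the simultaneous inequalities $i_m\leq k\leq j_m$ and $k\leq i_m\leq l\leq j_m$.
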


(Note that the statement of the lemma avoids assuming any uniqueness of 
the decomposition of $X$ as a sum of indecomposable representations.  
In fact, for $X \in \rep Q$, and $Q$ any quiver, the collection of 
summands appearing in an expression for $X$ as a sum of 
indecomposable representations is unique up to permutation.  This is called
the Krull-Schmidt property, and it is established, for example, in 
\cite[Section 1.4]{T_ASS}.  However, in the interests of self-containedness,
we have preferred to avoid the use of this.)

\begin{proof} 
Consider an expression of $X$ as a sum of indecomposables.  By 
Proposition \ref{T_morphisms}, $E^{kl}$ does not admit any morphisms from
$E^{ij}$ with $i<k$, so we may assume $X$ contains no such summands.  
On the other hand, when we consider summands of $X$ of the form
$E^{ij}$ with $i>k$, we see that the map from such summands to $E^{kl}$ cannot
be surjective at vertex $k$.  Therefore $X$ must have some summand of
the form $E^{kj}$ which admits a morphism to $E^{kl}$; using
Proposition \ref{T_morphisms} again, we see that $l\leq j$.  
\end{proof}

Now we prove the proposition:

\begin{proof} 
Clearly there are surjections: 
$$E^{ii}\twoheadleftarrow E^{i(i+1)} \twoheadleftarrow \dots 
\twoheadleftarrow E^{in}$$
It follows that a quotient-closed subcategory which contains 
$E^{ij}$ necessarily contains $E^{ip}$ for all $i\leq p \leq j$, and 
thus that any quotient-closed subcategory is of the form 
$\C_\n$ for some $\n\in \N$.  

Next, we verify that any such subcategory is quotient-closed. Suppose 
$X\in \C_\n$, and $X$ admits a surjection to $Y$.  Assume for the sake
of contradiction that $Y$ is not in $\C_\n$.  So $Y$ has some indecomposable
summand $E^{ij}$ which is not in $\C_\n$, and $E^{ij}$, in particular, admits a
surjection from $X$.  By Lemma \ref{T_surjections}, it follows that
$X$ has some summand of the form $E^{ik}$ with $k\geq j$, so $(i,k)\in \F_\n$,
so $(i,j)\in \F_\n$, 
contradicting the assumption that $Y\not\in \C_\n$.  
\end{proof}

\section{Subcategories ordered by inclusion}

We consider the obvious order on $\N$, the order it inherits as a Cartesian
product, and we write that $\n=(a_1,\dots,a_n) \leq \m=(b_1,\dots,b_n)$ iff
$a_i\leq b_i$ for all $i$.  

\begin{proposition} For $\n,\m \in \N$, $\C_{\n} \subseteq \C_{\m}$ 
iff $\n \leq \m$.  
\end{proposition}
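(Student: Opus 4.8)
The plan is to prove this by unwinding the definitions on both sides. Recall that $\C_\n$ consists of direct sums of indecomposables $E^{ij}$ with $(i,j) \in \F_\n$, where $\F_\n = \{(i,j) \mid i \leq j < i + a_i\}$. Since a full additive subcategory is determined by its set of indecomposable summands, the inclusion $\C_\n \subseteq \C_\m$ holds if and only if every indecomposable $E^{ij}$ belonging to $\C_\n$ also belongs to $\C_\m$; that is, if and only if $\F_\n \subseteq \F_\m$. The entire proposition thus reduces to the purely combinatorial claim that $\F_\n \subseteq \F_\m$ if and only if $\n \leq \m$.

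First I would establish the easy direction: assume $\n \leq \m$, so $a_i \leq b_i$ for every $i$. If $(i,j) \in \F_\n$, then $i \leq j < i + a_i \leq i + b_i$, so $(i,j) \in \F_\m$, giving $\F_\n \subseteq \F_\m$ and hence $\C_\n \subseteq \C_\m$. For the converse, I would argue contrapositively or directly: suppose $\F_\n \subseteq \F_\m$ but $\n \not\leq \m$, so there is some index $i$ with $a_i > b_i$. The key observation is that $\F_\n$ encodes the value $a_i$ faithfully: the number of pairs of the form $(i,\,-\,)$ in $\F_\n$ is exactly $a_i$, since these are the pairs $(i,j)$ with $i \leq j \leq i + a_i - 1$. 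If $a_i > b_i$, then in particular $a_i \geq 1$, and the pair $(i,\, i + a_i - 1)$ lies in $\F_\n$; but since $i + a_i - 1 \geq i + b_i$, this pair does not satisfy $j < i + b_i$, so it is not in $\F_\m$, contradicting $\F_\n \subseteq \F_\m$.

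The main point to get right is the bookkeeping in the definition of $\F_\n$, specifically the strict inequality $j < i + a_i$: this means the largest $j$ appearing with first coordinate $i$ is $j = i + a_i - 1$ (when $a_i \geq 1$), and there are exactly $a_i$ such pairs. I do not anticipate any genuine obstacle here; the only care needed is to handle the boundary correctly, noting that when $a_i = 0$ there are no pairs $(i, \,-\,)$ in $\F_\n$ at all, consistent with the fact that the constraint $0 \leq a_i \leq n+1-i$ keeps all indices in range. Stitching the two directions together with the reduction to indecomposable summands yields the full equivalence $\C_\n \subseteq \C_\m \iff \n \leq \m$.
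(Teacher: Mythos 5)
Your proof is correct and follows essentially the same route as the paper: both directions reduce to the combinatorial equivalence $\F_\n \subseteq \F_\m \iff \n \leq \m$ after observing that the subcategories are determined by their indecomposable objects. You spell out the converse (exhibiting the pair $(i,\,i+a_i-1)$ when $a_i > b_i$) more explicitly than the paper does, but the argument is the same.
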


\begin{proof} Clearly if $\n \leq \m$, then $\F_\n \subset \F_\m$, and therefore
$\C_{\n} \subseteq \C_{\m}$.  Conversely, if $\C_\n \subseteq \C_\m$, 
then in particular the indecomposable objects of $\C_\n$ are contained
among those of $\C_\m$.  Since the objects of $\C_\n$ are direct sums
of objects $E^{ij}$ with $(i,j)\in \F_\n$, the indecomposable objects
of $\C_\n$ are exactly those $E^{ij}$ with $(i,j)\in \F_\n$.  
It follows that $\F_\n\subseteq \F_\m$, and thus
$\n \leq \m$.  
\end{proof}

\section{Torsion classes in $\rep \A$}

We now define a subset of $\N$.  We say
that an $n$-tuple $(a_1,\dots,a_n)\in \N$ is a 
{\it bracket vector} if, for all
$1\leq i \leq n$ and $j\leq a_i$, 
we have that $j+a_{i+j}\leq a_i$.  
The well-formed bracket strings of length $2n+2$ correspond bijectively
to bracket vectors of length $n$: for each open-parenthesis, find 
the corresponding close-parenthesis, and then record the number of 
open-parentheses strictly between them.  Reading these numbers from
left to right, and skipping the last one (which is necessarily zero), we
obtain a bracket vector. 
Thus, for example, $()(())$ is encoded by the bracket vector $01$, while
$(()())$ is encoded by the bracket vector $20$.  
The notion of bracket vector goes back to Huang and Tamari \cite{T_HT}.
They show in addition that the poset structure induced on bracket
vectors from their inclusion into $\N$ is isomorphic to the Tamari 
lattice.  

The main result of this section is the following theorem:

\begin{theorem} \label{T_torsion}
The torsion classes of $\rep \A$ are exactly
the subcategories $\C_\n$ for $\n$ a bracket
vector.
\end{theorem}

Before we begin the proof of this theorem, we will first state
and prove the corollary which is the main result of this chapter.

\begin{corollary}
The torsion classes in $\rep \A$, ordered by inclusion, form a poset
isomorphic to the Tamari lattice.
\end{corollary}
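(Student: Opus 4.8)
The plan is to assemble the corollary from ingredients already in place: the classification of torsion classes in Theorem~\ref{T_torsion}, the proposition identifying inclusion of the subcategories $\C_\n$ with the product order on $\N$, and the result of Huang and Tamari \cite{T_HT} identifying the induced order on bracket vectors with the Tamari lattice.

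First I would show that the assignment $\n \mapsto \C_\n$ is a poset isomorphism from the set of bracket vectors, ordered by the restriction of the product order on $\N$, onto the torsion classes of $\rep\A$, ordered by inclusion. By Theorem~\ref{T_torsion} this map surjects onto the torsion classes. Invoking the proposition that $\C_\n \subseteq \C_\m$ iff $\n \leq \m$, applied in both directions, $\C_\n = \C_\m$ forces $\n \leq \m$ and $\m \leq \n$, hence $\n = \m$, so the map is injective; the same proposition shows that both this map and its inverse are order-preserving, which is precisely the statement that it is an isomorphism of posets.

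Finally I would invoke \cite{T_HT}, which asserts that the bracket vectors under the order inherited from $\N$ form a poset isomorphic to the Tamari lattice. Composing this isomorphism with the one just established yields that the torsion classes of $\rep\A$, ordered by inclusion, form a poset isomorphic to the Tamari lattice, as claimed. I expect no substantive difficulty here, since all the genuine work has been localized in the earlier results; the only point meriting care is confirming that the order on bracket vectors used by Huang and Tamari is exactly the restriction to bracket vectors of the product order on $\N$, so that the two isomorphisms compose cleanly, and that the inclusion proposition, stated for all of $\N$, restricts correctly to the subset of bracket vectors.
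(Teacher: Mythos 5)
Your proposal is correct and follows exactly the paper's own argument: combine Theorem~\ref{T_torsion} (surjectivity of $\n \mapsto \C_\n$ onto torsion classes), the proposition that $\C_\n \subseteq \C_\m$ iff $\n \leq \m$ (injectivity and order-isomorphism), and the Huang--Tamari identification of the induced order on bracket vectors with the Tamari lattice. The only difference is that you spell out the injectivity and two-sided order-preservation explicitly, which the paper leaves implicit.
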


\begin{proof} We have already observed that $\C_\n \subseteq \C_\m$
iff $\n \leq \m$.  It follows that the torsion classes for 
$\rep \A$, ordered by inclusion, form a poset isomorphic to the
poset structure induced on bracket vectors from their inclusion
into $\N$, which, as we have already remarked, is shown in \cite{T_HT} to be
 isomorphic to the Tamari lattice.  
\end{proof}

Next, we need to establish
some terminology and prove a lemma.  

For $\n$ a bracket vector, let 
$$\G_\n= \{(i,i+a_i-1)\mid 1\leq i \leq n, a_i\geq 1\}.$$

Let $\D_\n$ consist of the full subcategory consisting of sums of 
$E^{ij}$ for $(i,j) \in \G_\n$.  
Observe that $\D_\n$ is a subcategory of $\C_\n$, and 
any object in $\C_\n$ is a quotient of some 
object in $\D_\n$.

\begin{lemma} \label{T_trivial} Let $\n$ be a bracket vector.  
If $X\in \C_\n$, and $Z \in \D_\n$, then any extension
of $Z$ by $X$ is trivial.  \end{lemma}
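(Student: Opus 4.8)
The plan is to show that any extension $Y$ of $Z$ by $X$ splits, using the explicit extension classification from Proposition \ref{T_extensions}. Since everything is additive and extensions distribute over direct sums, it suffices to check triviality when both $X$ and $Z$ are indecomposable: write $X = E^{ij}$ with $(i,j) \in \F_\n$ and $Z = E^{kl}$ with $(k,l) \in \G_\n$, so $l = k + a_k - 1$. An extension of $E^{kl}$ by $E^{ij}$ is nontrivial only when $i+1 \leq k \leq j+1 \leq l$, by Proposition \ref{T_extensions}. So I would assume for contradiction that these inequalities hold and derive a violation of the bracket-vector condition.

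The key step is to translate the membership conditions into the defining inequalities of a bracket vector and reach a contradiction with the inequality $i+1 \leq k \leq j+1 \leq l$. From $(i,j) \in \F_\n$ we have $j < i + a_i$, i.e.\ $j \leq i + a_i - 1$. The nontriviality inequalities give $k \leq j+1$, hence $k \leq i + a_i$, so $k - i \leq a_i$; writing $m = k - i$ this says $0 \leq m \leq a_i$ (and $m \geq 1$ since $i+1 \leq k$). The bracket-vector condition applied at index $i$ with $j := m$ then yields $m + a_{i+m} \leq a_i$, that is, $(k-i) + a_k \leq a_i$. Now I use $(k,l) \in \G_\n$, so $l = k + a_k - 1$, and the nontriviality condition $j+1 \leq l$, i.e.\ $j \leq l - 1 = k + a_k - 2$. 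Combining $j \leq i + a_i - 1$ is not directly enough, so instead I would push the chain $j + 1 \leq l = k + a_k - 1$ together with $(k-i) + a_k \leq a_i$ to bound $j$ and show it forces $j \geq i + a_i$, contradicting $(i,j) \in \F_\n$.

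Concretely, from $j + 1 \leq l = k + a_k - 1$ I get $j \leq k + a_k - 2$. Substituting $k = i + m$ and using the bracket inequality $m + a_k \leq a_i$ rearranged as $a_k \leq a_i - m = a_i - (k-i)$, I obtain $j \leq k + a_k - 2 \leq k + (a_i - (k-i)) - 2 = i + a_i - 2 < i + a_i$, which is consistent with $(i,j)\in\F_\n$ rather than contradicting it, so the bound must be sharpened by tracking that the extension nontriviality actually forces the \emph{opposite} direction. I therefore expect the main obstacle to be getting the inequalities to point the right way: the cleanest route is probably to argue directly that if $i+1 \leq k \leq j+1 \leq l$ then the pair $(i,l)$ must land in $\F_\n$ (which the extension $E^{il}$ being produced would require of a quotient-closed enlargement), and show the bracket condition instead forces $l \geq i + a_i$, so that $E^{il} \notin \C_\n$ while $E^{il}$ is a quotient of the hypothetical nonsplit $Y$, contradicting that $Z \in \D_\n \subseteq \C_\n$ supplies the range constraint $l = k + a_k - 1 \leq i + a_i - 1 < i + a_i$.

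I would finish by reducing the general (decomposable) case to the indecomposable case: an arbitrary $X \in \C_\n$ and $Z \in \D_\n$ decompose as $X = \bigoplus E^{i_s j_s}$ and $Z = \bigoplus E^{k_t l_t}$, and since $\Ext(Z, X)$ splits as a direct sum over these summands (extensions being built componentwise), triviality of every $\Ext(E^{k_t l_t}, E^{i_s j_s})$ yields triviality of the whole extension; by Lemma \ref{T_split} this means $Y \cong X \oplus Z$. The main genuine difficulty is the indecomposable inequality bookkeeping in the middle paragraph; once the direction of the bracket inequality is correctly matched against the window $i+1 \leq k \leq j+1 \leq l$, the contradiction is immediate and the rest is formal.
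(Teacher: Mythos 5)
There is a genuine gap, and you have in fact detected its symptom yourself: your inequalities ``point the wrong way'' because you have swapped the roles of sub and quotient when invoking Proposition \ref{T_extensions}. In this paper ``$Y$ is an extension of $Z$ by $X$'' means $X$ is the \emph{subrepresentation} and $Z$ is the \emph{quotient}, and Proposition \ref{T_extensions} says that an extension with quotient $E^{ij}$ and sub $E^{kl}$ can be nontrivial only if $i+1 \leq k \leq j+1 \leq l$. In Lemma \ref{T_trivial} the sub is $X = E^{ij} \in \C_\n$ and the quotient is $Z = E^{kl} \in \D_\n$, so the relevant condition is $k+1 \leq i \leq l+1 \leq j$, not $i+1 \leq k \leq j+1 \leq l$ as you wrote. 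With the correct orientation the contradiction is immediate: writing $i = k+m$ with $1 \leq m \leq a_k$ (from $k < i \leq l+1 = k+a_k$), the bracket condition gives $m + a_{k+m} \leq a_k$, i.e.\ $i + a_i \leq k + a_k = l+1 \leq j$, contradicting $j < i + a_i$. With your swapped orientation there is no contradiction to be had, because the swapped statement is actually \emph{false}: for the bracket vector $\n = (3,1,0)$ in $A_3$, the representation $E^{12}$ is a nontrivial extension with quotient $E^{11} \in \C_\n$ and sub $E^{22} \in \D_\n$. Your third paragraph, which tries to repair this by producing $E^{il}$ as a quotient of $Y$, does not cohere into an argument.

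A secondary issue: your reduction to the case where \emph{both} $X$ and $Z$ are indecomposable relies on $\Ext(Z,X)$ being additive in the sub variable $X$, which is true but is not established anywhere in this paper (the $K$-vector space structure on $\Ext$ is explicitly flagged as ``not necessary for our present considerations''). The paper's own proof only decomposes $Z$, by realizing an extension of $Z^1 \oplus \cdots \oplus Z^m$ as an iterated extension, and then handles a general $X \in \C_\n$ by an element argument: it takes $t \in Y_i$ lifting a generator of $Z = E^{i(i+a_i-1)}$, looks at its image $v$ in $Y_{i+a_i}$, and uses the bracket condition to show that any summand $E^{kl}$ of $X$ supporting $v$ must have $k \leq i$, so that $t$ can be corrected to $t - x$ generating a copy of $Z$ split off by the quotient map. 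If you keep your classification-based strategy, you would need either to supply the additivity of extensions over direct summands of the sub, or to keep $X$ general as the paper does.
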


\begin{proof} Write $Z=Z^1\oplus\dots\oplus Z^m$.  Observe that any
extension of $Z$ by $X$ can be realized by first forming an
extension of $Z^1$ by $X$, call it $Y^1$, then forming an extension of $Z^2$ 
by $Y^1$, call it $Y^2$,  and so on.  If the extension at each step is 
trivial, then the total extension is trivial, so it suffices to consider
the case that 
$Z$ is indecomposable. Suppose therefore that $Z\cong E^{i(i+a_i-1)}$, and
let $Y$ be an extension of $Z$ by $X$, for some $X\in \C_\n$. 

Let $t$ be an element of $Y_i$ which maps to a nonzero generator of 
$Z$.  Let $T$ be the subrepresentation of $Y$ generated by $t$.  
Let $v$ be the image of $t$ in $Y_{i+a_i}$.  

If $v$ is non-zero then, since $Z$ is not supported over 
$i+a_i$, we must have that $v\in X_{i+a_i}$.  Let $E^{kl}$ be a summand of 
$X$ in which $v$ is non-zero.  So we know that $k\leq i+a_i\leq l$.  
By the assumption that $(k,l)\in \F_\n$, it follows that $k\leq i$.  
Therefore, it follows
that, as in the proof of Proposition \ref{T_extensions}, we can find an
element $x$ of $X_i$ whose image in $X_{i+a_i}$ equals $v$.  Now
let $t'=t-x$.  The image of $t'$ in $Y_{i+a_i}$ is zero, so $T$ is
isomorphic to $Z$.  Therefore, the 
projection from $Y$ to $Z$ splits the inclusion of $T$ into $Y$, and the
extension of $Z$ by $X$ is trivial.  

\end{proof}

\begin
{proof}[Proof of Theorem \ref{T_torsion}]
First, we show that if 
$\n=(a_1,\dots,a_n) \in \N$ 
is not a bracket vector, then
$\C_\n$ is not a torsion class.  
So suppose we have some $i,j$ such that
$1\leq i \leq n$, $j\leq a_i$, and 
$j+a_{i+j}>a_i$.  
We know that $\C_\n$ contains
$E^{(i,i+a_i-1)}$ and $E^{(i+j,i+j+a_{i+j}-1)}$,
and from our assumptions, 
$i<i+j \leq i+a_i < i+j+a_{i+j}$.  
By Proposition \ref{T_extensions}, it follows
that $E^{i(i+j+a_{i+j}-1)} \oplus 
E^{(i+j,i+a_i-1)}$ is an extension of 
$E^{(i+j,i+j+a_{i+j}-1)}$ by $E^{(i,i+a_i-1)}$, and since $i+j+a_{i+j}>i+a_i$, we know
that $E^{(i+j,i+j+a_{i+j}-1)}$ is not contained in $\C_\n$.  Thus $\C_\n$ is not
closed under extensions, so it is not a torsion class.  

Now, we show that if $\n$ is a bracket vector, then $\C_\n$ is a torsion
class.  We have already shown that $\C_\n$ is quotient-closed, so all that
remains is to show that it is closed under extensions.  

Let $X$ and $Z$ be representations in $\C_\n$.  If we could assume
that $X$ and $Z$ were indecomposable, our lives would be much easier ---
an argument very similar to the converse direction would suffice.  However,
there is no reason that we can assume that.  

Choose an object $Z'\in \D_n$ such that 
$Z'$ has a surjection onto $Z$. 
Let $Y'$ be the pullback along
$Z'\rightarrow Z$ of the extension of $Z$ by $X$.  By Lemma \ref{T_trivial},
this is a trivial extension, so $Y' \in \C_\n$.  
By Lemma
\ref{T_pullback}, $Y'$ admits a surjective map to $Y$.  Thus $Y$ is a 
quotient of an element of $\C_\n$, and thus lies in $\C_\n$.  Therefore,
$\C_\n$ is closed under extensions.  
\end{proof}

\section{Related posets}
As was already mentioned, for arbitrary $Q$, we obtain a poset of 
torsion classes ordered by inclusion.  In fact, 
it is easy to check from the definition 
that the intersection of an arbitrary set of
torsion classes is again a torsion class.  Thus, this poset is closed
under arbitrary meets, and it has a maximum element, so it is a lattice.  

The number of indecomposable representations of $Q$ is finite if and only
if $Q$ is an orientation of a simply-laced Dynkin diagram.  (This is 
part of the celebrated theorem of Gabriel, see, for example,
\cite[Theorem VII.5.10]{T_ASS}.)
For such $Q$ (and only such $Q$),
the lattice of torsion classes is a (finite) 
Cambrian lattice, for a Coxeter element
chosen based on the orientation of $Q$.  See \cite{T_re} for more on Cambrian lattices, and \cite{T_IT,T_AIRT} for this result.  

\bigskip

The poset of torsion classes has not been classically studied in 
representation theory.  However, a closely related poset does appear.
For the remainder of this section, suppose that $Q$ is a quiver with
no oriented cycles.  

A representation $T$ of $Q$ is called a \emph{tilting object} if 
the only extension of
$T$ with itself is the trivial extension, and $T$ has $n$ pairwise
non-isomorphic summands ($n$ being the number of vertices of $Q$).  For $X\in \rep Q$, write
$\Gen X$ for the subcategory of $\rep Q$ consisting of all quotients of direct sums of 
copies of $X$.  A poset was defined by Riedtmann and Schofield \cite{T_RS} 
on the tilting objects of $\rep Q$, by
$T\geq V$ iff $\Gen T \supseteq \Gen V$.  This poset was studied further by Happel
and Unger \cite{T_HU2,T_HU1,T_HU3}.

This poset structure is related to the one discussed in this paper, because
if $T$ is a tilting object, then $\Gen T$ is a torsion class.  Further,
the torsion classes arising in this way can be described: they are just
the torsion classes which include all the injective representations of 
$Q$, see \cite[Theorem VI.6.5]{T_ASS}.  Thus, the torsion classes arising
in this way form an interval in the poset of all torsion classes,
whose minimal element is the torsion class consisting only of injective
representations, and whose maximal element is the torsion class consisting
of all representations.  

In the case of $\rep \A$, $\C_\n$ is sincere iff $a_1=n$, since the injective
indecomposable representations are those of the form $E^{1j}$ for 
$1\leq j \leq n$.  
There is a bijection from bracket vectors with $a_1=n$ to bracket
vectors of length $n-1$, by removing $a_1$. (A bracketing 
corresponding to a bracket vector with $a_1=n$ has its first open
parenthesis closed by the final close parenthesis of the bracketing,
from which this claim follows immediately.)
It therefore follows that the
Riedtmann-Schofield order on tilting objects for $\rep \A$  is 
isomorphic to the Tamari lattice $T_{n-1}$.   The poset structure on
tilting objects in the $A_n$ case 
was first analyzed in \cite{T_BK}.

%




\section*{Acknowledgements}

The author was partially supported by an NSERC Discovery Grant.  He thanks
the editors of the Tamari Festschrift for the invitation to participate
in their volume, and for helpful editorial suggestions.

\end{document}